\newcommand{\perm}{M^\lambda}
\newcommand{\Yng}{Y^\mu}
\newcommand{\sym}{\Sigma_{r}}
\newcommand{\grpalg}{K\left[\sym\right]}
\newcommand{\ynggrp}{\Sigma_\lambda}
\newcommand{\pro}{e_{m,g}}
\newcommand{\End}{{\rm End}_{\grpalg}}
\newtheorem{definition}{Definition}[section]
\newtheorem{lemma}[definition]{Lemma}
\newtheorem{example}[definition]{Example}
\newtheorem{theorem}[definition]{Theorem}
\newtheorem{corollary}[definition]{Corollary}
\begin{document}
\title{Endomorphism Rings of Some Young Modules}
\author{Jasdeep Singh Kochhar}
\date{}
\maketitle
\begin{abstract}
Let $\sym$ be the symmetric group acting on $r$ letters, $K$ be a field of characteristic 2, and $\lambda$ and $\mu$ be partitions of $r$ in at most two parts. Denote the permutation module corresponding to the Young subgroup $\ynggrp$, in $\sym$, by $\perm$, and the indecomposable Young module by $\Yng$. We give an explicit presentation of the endomorphism algebra $\End(\Yng)$, using the idempotents found by Doty, Erdmann and Henke in \cite{DEH}. 
\end{abstract}
\section{Introduction}
Permutation modules of symmetric groups, arising from actions on set partitions, are of central interest in the representation theory of symmetric groups. They also provide a link with the representation theory of general linear groups, via Schur algebras.

Let $K$ be a field of prime characteristic $p$, and let $n$ and $r$ be positive integers. For each partition $\lambda$  of $r$ with at most $n$ parts, let $\perm$ be the permutation module of the symmetric group $\sym$ of degree $r$, corresponding to the set partition $\lambda$. The indecomposable summands of the modules $\perm$ are known as Young modules, where $\Yng$ is the unique summand of $\perm$ that contains $S^\mu$. The module $\perm$ is in general a direct sum of Young modules $\Yng$, and if $\Yng$ occurs as a direct summand of $\perm$, then $\mu \ge \lambda$ in the dominance order of partitions. The $p$-Kostka number, $[\perm: \Yng]$, is the number of indecomposable summands of $\perm$ isomorphic to $\Yng$, and therefore $$\perm \cong  \bigoplus_{\mu \geq \lambda} [M^{\lambda}:Y^{\mu}]Y^{\mu}.$$

The paper \cite{DEH} studies the endomorphism algebra of $\perm$, denoted $S_K(\lambda)$, when $K$ has characteristic 2 and $\lambda$ has at most two parts. In this case, $S_K(\lambda)$ is commutative, and its primitive idempotents are unique. The main result of \cite{DEH} is the explicit construction of all primitive idempotents of $S_K(\lambda)$, establishing a one-to-one correspondence with the 2-Kostka numbers, explicitly: the idempotent corresponding to $[\perm:\Yng]$ generates the endomorphism algebra of the Young module, $\Yng$. 

In this paper, we study the endomorphism algebra of $\Yng$ as the subalgebra of $S_K(\lambda)$ that is generated by the primitive idempotent constructed in \cite{DEH}. We show that the algebra structure of $\End(\Yng)$ depends only on its dimension as a $K$-vector space, but not on the partition $\mu$. The dimension of the endomorphism algebra of $\Yng$, where $\mu$ is a partition of $r$ in at most two parts, is known (see \cite{H}). 

We will give an explicit presentation of $\End(\Yng)$ by giving generators and relations for this algebra. For a precise description, see Theorem \ref{dim thm}. The result of Theorem \ref{dim thm} may be surprising since the submodule structure of these Young modules can get more and more complicated for large $r$, as can be seen for example in \cite{EH}. As a representative for a $t$-dimensional endomorphism algebra, one can take the endomorphism algebra of $Y^{(t-1,t-1)}$; we see in Example \ref{indecomp} that this module is isomorphic to $M^{(t-1,t-1)}$.
\section{Background}
Doty and Giaquinto found presentations of the Schur algebras $S_K(n,r)$ in terms of the universal enveloping algebras of the Lie algebras {\sf gl}$_n$. We assume that $n=2$, which is the case when $\lambda$ is a partition of $r$ in at most two parts \textit{i.e.} $\lambda = (\lambda_1,\lambda_2)$. Based on the results in \cite{DG}, the paper \cite{DEH} determines a basis and a multiplication formula for the endomorphism algebra of $\perm$. We will summarise what we need, for details we refer to \cite{DEH}.
\subsection{The canonical basis for $S_K(\lambda)$}\label{can basis}
\begin{definition} \normalfont Let $K$ be a field of arbitrary characteristic $p$ \textit{i.e.} $p \ge 0$. The algebra $S_K(\lambda):= \End(\perm)$ has basis 
$$\lbrace b(i) : i \in \mathbb{Z} \text{ and } 0 \le i \le \lambda_2\rbrace.$$ We will refer to this basis as the canonical basis of $S_K(\lambda)$ and the multiplication of these basis elements is given by:
$$b(i) \cdot b(j) = \displaystyle\sum_{k=0}^{i} \binom{j+k} {i}\binom{j+k} {k}\binom{m+j+i}{i-k}b(j+k),$$
where $m:= \lambda_1 - \lambda_2$, and we set $b(a)=0$ for $a>\lambda_2.$ Here the coefficients are taken modulo $p$ when the field $K$ has characteristic $p>0$.
\end{definition}
\subsection{Notation}
For an integer $a$ with $p$-adic expansion $a = \sum_{i=0}^{s} a_ip^i$, where $0 \le a_i \le p-1$ for all $i$, we write $a= [a_0,a_1,\ldots,a_s]$. We also have for non-negative integers $m$ and $n$, with respective $p$-adic expansions $m=[m_0,m_1,\ldots,m_s]$ and $n = [n_0,n_1,\ldots,n_t]$, where $s, t \ge 0$, that: $$\binom{m} {n} \equiv_p \prod_{i=0}^{\max\lbrace s,t\rbrace} \binom{m_i}{n_i}.$$ We refer to the right hand side of the above as the $p$-adic expansion of the binomial coefficient.

In a field $K$ of positive characteristic $p$, the following holds:
\begin{lemma}\cite[Lemma 3.7]{DEH}
Let $i = [i_0,i_1,\ldots,i_s]$. Then $b(i) = \prod_{t=0}^s b(i_t\cdot p^t)$ in $S_K(\lambda)$.
\end{lemma}
It can then be proved that the algebra $S_K(\lambda)$ can be generated by the elements $b(p^0), b(p^1),\ldots,b(p^t)$, where $t$ is the unique natural number such that $p^{t}\le \lambda_2 < p^{t+1}$. For the case when $p=2$, the result is immediate; for $i$ with binary expansion $[i_0,i_1,\ldots,i_s]$ the coefficients $i_t$, where $0 \le t \le s$, are 0 or 1.
\subsection{The idempotents $\pro$}
From now on, we assume $K$ is a field of characteristic 2. Let $\lambda$ and $\mu$ be partitions of $r$ in at most two parts \textit{i.e.} $\lambda = (\lambda_1,\lambda_2)$ and $\mu=(\mu_1,\mu_2)$, where $\mu \ge \lambda$ in the dominance order of partitions. Define $m:= \lambda_1-\lambda_2$ and $g:=\lambda_2-\mu_2$, and so given $r$, from $m$ and $g$, we can completely determine $\lambda$ and $\mu$. It is known (see \cite{H}) that $\Yng$ is a direct summand of $\perm$ if and only if $$B(m,g) := \binom{m+ 2g} {g} \not\equiv_2 0.$$In \cite{DEH}, the binary expansion of $B(m,g)$ is used to construct an element of $S_K(\lambda)$, denoted $\pro$. We begin by defining the index sets $I_{m,g}$ and $J_{m,g}$ as follows: $$I_{m,g}:=\lbrace u: g_u = 0 \text{ and } (m+2g)_u=1 \rbrace,$$ $$J_{m,g}:= \lbrace u: g_u=1 \text{ and } (m+2g)_u = 1\rbrace.$$ For a natural number $t$, define elements of $S_K(\lambda)$ by: $$\pro := \displaystyle\prod_{u \in J_{m,g}} b(2^u) \prod_{u\in I_{m,g}}(1-b(2^u)),$$ $$e_{{m,g}_{\le t}} := \prod_{u \in J_{m,g}, u \le t}b(2^u) \prod_{u\in I_{m,g}, u \le t}(1-b(2^u)).$$ If $u$ is contained in $I_{m,g} \cup J_{m,g}$, we say that $b(2^u)$ is \textit{involved} in $\pro$. We can form a correspondence between the factors of $\pro$ and the binomial coefficients that are factors in the binary expansion of $B(m,g)$, as follows: 
$$\begin{array}{c|c|c|c|c}
 \binom{(m+2g)_u}{g_u} & \binom{1}{1} & \binom{1}{0} & \binom{0}{0} & \binom{0}{1}\\
\hline
\text{Factor of } e_{m,g} & b(2^u) & 1-b(2^u) & 1 & 0 \\
\end{array}.$$ Therefore $\pro$ is equal to 0 if and only if $\binom{0}{1}$ is a factor in the binary expansion of $B(m,g)$. This happens if and only if $B(m,g)$ equals 0 modulo 2. By \cite{H}, this is precisely the case when $\Yng$ is not a summand of $\perm$. In \cite{DEH}, it is proved that the  $\pro$ are the primitive orthogonal idempotents in $S_K(\lambda)$, $i.e.$ the following holds:
\begin{theorem}[Idempotent Theorem]\cite{DEH}\label{idempotent theorem} Fix $m\ge 0$. The set of elements $\pro$, with $B(m,g) \neq 0$ modulo 2, and $m+2g \le r$ give a complete set of primitive orthogonal idempotents for $S_K(\lambda)$. \end{theorem}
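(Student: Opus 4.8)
The plan is to treat $S_K(\lambda)$ as a finite-dimensional commutative algebra and to verify the defining properties of a complete set of primitive orthogonal idempotents in the order: nonvanishing, idempotency, orthogonality, and finally completeness together with primitivity. The nonvanishing $e_{m,g}\neq 0$ exactly when $B(m,g)\not\equiv_2 0$ is already recorded by the table of factors, so I would take it as established. The commutativity of $S_K(\lambda)$ and the uniqueness of its primitive idempotents, quoted in the introduction, will be used throughout; in particular they force every $2$-Kostka number $[\perm:\Yng]$ to be $0$ or $1$, since a multiplicity $\geq 2$ would embed a noncommutative matrix algebra into $\End(\perm)$. I would then split the remaining work into a computational core (idempotency and orthogonality) and a structural tail (completeness and primitivity), the aim being to reduce the latter to a dimension count.

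For the computational core I would prove the single clean statement that controls everything, namely the \emph{eigenvalue relations}
$$b(2^u)\, e_{m,g} = \epsilon_u\, e_{m,g}, \qquad \epsilon_u = \begin{cases} 1 & u \in J_{m,g},\\ 0 & \text{otherwise.}\end{cases}$$
Granting these, idempotency and orthogonality are immediate: applying the relations factor by factor gives $e_{m,g}^2 = \prod_{u\in J_{m,g}}\epsilon_u \prod_{u\in I_{m,g}}(1-\epsilon_u)\, e_{m,g} = e_{m,g}$, while if $g\neq g'$ they differ at some bit, whose digit-$1$ owner is forced into the relevant $J$-set by the validity criterion and so carries eigenvalue $1$ there, against eigenvalue $0$ for the other, collapsing the product to $0$. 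To establish the relations themselves I would induct on the top bit $t$ (so $2^{t-1}<\lambda_2\le 2^t$), passing through the truncated idempotents $e_{{m,g}_{\le t}}$ and using Lemma \ref{lem :pcont} to replace any product of the generators $b(2^u)$ over distinct bits by a single basis element $b\!\left(\sum 2^u\right)$. The multiplication formula then expresses each step as a sum of basis elements whose coefficients are products of binomial coefficients modulo $2$, to be evaluated by Lucas' theorem and Kummer's carry criterion.

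The structural tail is then short. The number of admissible $g$, namely those with $m+2g\le r$ and $B(m,g)\not\equiv_2 0$, equals the number of $\mu$ with $\Yng$ a summand of $\perm$, by the criterion of \cite{H} recalled above together with the bijection $g=\lambda_2-\mu_2$. Since each such $\Yng$ is absolutely indecomposable and occurs with multiplicity one, this count is exactly $d:=\dim_K\!\big(S_K(\lambda)/\operatorname{rad} S_K(\lambda)\big)$, the number of blocks of the commutative Artinian algebra $S_K(\lambda)$. A commutative Artinian algebra with $d$ blocks admits at most $d$ nonzero pairwise orthogonal idempotents, with equality only when they are primitive and sum to the identity; the $e_{m,g}$ produced above meet this bound, so completeness and primitivity follow at once. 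The main obstacle is the inductive step of the eigenvalue relations: the same-bit square $b(2^u)^2$ is \emph{not} equal to $b(2^u)$ in $S_K(\lambda)$, and expanding it via the multiplication formula feeds in coefficients $\binom{m+2^{u+1}}{\ell}$ whose vanishing depends delicately on the carries occurring in $m+2g$. Showing that precisely the spurious higher terms are annihilated by the remaining factors of $e_{m,g}$ is where the real effort, and the interaction between the binary digits of $g$, $m$ and $m+2g$, will concentrate.
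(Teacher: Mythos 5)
The paper you are working from does not actually prove this theorem: it is quoted verbatim from \cite{DEH} (the citation is part of the statement), so your attempt can only be measured against the reference's computational proof and against internal consistency with the rest of the paper. Measured either way, there is a genuine gap, and it sits at your centerpiece. The ``eigenvalue relations'' $b(2^u)e_{m,g} = \epsilon_u e_{m,g}$ with $\epsilon_u = 0$ for \emph{every} $u \notin J_{m,g}$ are false at the bits $u \notin I_{m,g}\cup J_{m,g}$, i.e.\ at those $u$ with $(m+2g)_u = 0$. At such bits $e_{m,g}b(2^u)$ is in general nonzero and is not a scalar multiple of $e_{m,g}$: these products are precisely the generators of $\End(\Yng) = e_{m,g}S_K(\lambda)$ that this paper is about, and the true statement there is only the Orthogonality Lemma $e_{m,g}b(2^u)^2 = 0$ (the square, not the element). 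Concretely, for $\lambda = (t-1,t-1)$ with $t\ge 2$ one has $m=g=0$, $I_{0,0}=J_{0,0}=\emptyset$, $e_{0,0}=1$, and $b(1)e_{0,0}=b(1)\neq 0$, against your claimed eigenvalue $0$. Worse, if your relations held for all $u$, then every monomial in the generators $b(2^u)$ would act on $e_{m,g}$ as $0$ or $1$, forcing $e_{m,g}S_K(\lambda) = Ke_{m,g}$; every Young module endomorphism algebra would then be one-dimensional, contradicting the Basis Theorem and the $t$-dimensional example just given.

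This error is not cosmetic, because orthogonality is exactly where you invoke it. If $g\neq g'$ differ at bit $u$ with, say, $g_u=1$, validity does force $u\in J_{m,g}$; but for the other idempotent there are two cases: $(m+2g')_u=1$, so $u\in I_{m,g'}$ and the needed relation $b(2^u)e_{m,g'}=0$ is the (hard but true) $I$-bit relation, or $(m+2g')_u=0$, where $b(2^u)e_{m,g'}=0$ is precisely the false claim. Your sketch silently assumes the second case never occurs. It can occur at individual bits; what saves the theorem is that two distinct valid $g,g'$ must differ at \emph{some} bit where both columns of the respective binomial coefficients are nonzero. That is a separate combinatorial lemma your plan lacks: writing $a=m+2g$, $a'=m+2g'$ and supposing every differing bit has the opposite idempotent's digit of $a$ or $a'$ equal to zero, the identity $a-a'=2(g-g')$ read modulo $2^{u_0+1}$ at the lowest differing bit $u_0$ gives a contradiction. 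With the relations restricted to $u\in I_{m,g}\cup J_{m,g}$ (which is essentially this paper's Involvement Lemma, and is the real computational work you defer to the end) and with that extra lemma inserted, your remaining architecture --- completeness and primitivity by counting valid $g$ against the indecomposable summands of $\perm$, using multiplicity-freeness forced by commutativity --- is sound; but as written, the key relation you build everything on is false, and the step that uses its false part would fail.
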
 Theorem \ref{idempotent theorem} then implies: \begin{theorem}\cite[Theorem 7.1]{DEH} Let $\lambda = (\lambda_1, \lambda_2)$ and $\mu = (\mu_1, \mu_2)$ be partitions of $r$, such that $\Yng$ is a direct summand of $\perm$. Define $$m:= \lambda_1-\lambda_2 \textit{ and } g:=\lambda_2 - \mu_2.$$ Then $\pro$ is the idempotent element of $S_K(\lambda)$, such that $\pro\perm = \Yng$.\end{theorem}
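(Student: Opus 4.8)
The plan is to reduce the statement to a combinatorial identification of which summand of $\perm$ the idempotent $\pro$ hits, and then to read this off from a filtration of $S_K(\lambda)$ by ideals.

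First I would record the consequences of commutativity. Since $S_K(\lambda)$ is commutative, no indecomposable summand of $\perm$ can occur with multiplicity greater than one, as a repeated summand would contribute a matrix block and destroy commutativity; thus $\perm=\bigoplus_\nu Y^\nu$ with pairwise non-isomorphic summands, over the $\nu$ with $Y^\nu\mid\perm$. The primitive idempotents of $S_K(\lambda)$ are then exactly the projections onto these summands, and by the Idempotent Theorem this set equals $\{\pro:B(m,g)\not\equiv_2 0,\ m+2g\le r\}$. Hence $\pro$ is the projection onto some indecomposable summand $Y^{\nu(g)}$, and since $Y^\nu\mid\perm$ precisely when $B(m,\lambda_2-\nu_2)\not\equiv_2 0$, the rule $g\mapsto\nu(g)$ is a bijection between the admissible values of $g$ and the occurring $\nu$. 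What remains is to show this bijection is the expected one, i.e. $\nu(g)_2=\lambda_2-g=\mu_2$.

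Next I would compute the leading term of $\pro$ in the basis $\{b(j)\}$. As $B(m,g)=\binom{m+2g}{g}\not\equiv_2 0$, Lucas' theorem forces $g_u\le(m+2g)_u$ for all $u$, so $\{u:g_u=1\}\subseteq\{u:(m+2g)_u=1\}$ and therefore $J_{m,g}=\{u:g_u=1\}$; by Lemma \ref{lem :pcont} this gives $\prod_{u\in J_{m,g}}b(2^u)=b(g)$ and hence $\pro=b(g)\prod_{u\in I_{m,g}}\bigl(1-b(2^u)\bigr)$. Every $u\in I_{m,g}$ satisfies $g_u=0$, so the binary supports of $g$ and of the various $2^u$ are disjoint; expanding and applying Lemma \ref{lem :pcont} in characteristic $2$ yields
$$\pro=\sum_{T\subseteq I_{m,g}}b\Bigl(g+\sum_{u\in T}2^u\Bigr),$$
whose unique lowest-index term is $b(g)$, with coefficient $1$. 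The multiplication formula shows each $S_{\ge a}:=\langle b(j):j\ge a\rangle$ is an ideal, since $b(i)b(j)$ is supported on $\{\max(i,j),\dots,i+j\}$; so $\{S_{\ge a}\}_a$ is a descending filtration of $S_K(\lambda)$ by ideals, and the display places $\pro\in S_{\ge g}\setminus S_{\ge g+1}$.

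Finally I would match the filtration degree $g$ to the Young module. The two-row module $\perm$ has a multiplicity-free Specht filtration with factors $S^{(r-a,a)}$ for $0\le a\le\lambda_2$, each lying in exactly one indecomposable summand; in particular $\Yng$ is the unique summand containing $S^\mu$. I would show that $b(a)$ acts as a degree-$a$ operator with respect to this filtration, so that the primitive idempotent with leading term $b(g)$ must project onto the summand meeting the filtration at level $a=\lambda_2-g$, namely the unique summand containing $S^\mu$ with $\mu_2=\lambda_2-g$; as that summand is $\Yng$, this gives $\nu(g)=\mu$. The hard part will be exactly this dictionary between the algebraic filtration degree $g$ and the dominance level $\lambda_2-g$ of the summand, since it is the only step genuinely using the module structure of $\perm$ rather than the multiplication table of $S_K(\lambda)$; I expect to settle it either through the weight-space (Schur algebra) description of the $b(a)$, or by induction on the number of binary digits of $g$ via the truncated idempotents $e_{{m,g}_{\le t}}$, which are designed for such an argument.
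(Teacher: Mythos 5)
The paper itself gives no proof of this statement: it is quoted directly as \cite[Theorem 7.1]{DEH}, so there is no internal argument to compare yours against, and your attempt has to stand entirely on its own. Much of it does stand: commutativity of $S_K(\lambda)$ does force $\perm$ to be multiplicity-free, so the primitive idempotents of the Idempotent Theorem are exactly the projections onto the indecomposable summands, and $g \mapsto \nu(g)$ (defined by $\pro\perm \cong Y^{\nu(g)}$) is a bijection onto the occurring labels. Your leading-term computation is also correct: Lucas' theorem gives $J_{m,g}=\{u: g_u=1\}$, Lemma \ref{lem :pcont} gives $\prod_{u\in J_{m,g}}b(2^u)=b(g)$, and in characteristic $2$ the expansion $\pro=\sum_{T\subseteq I_{m,g}} b\bigl(g+\sum_{u\in T}2^u\bigr)$ holds, with $b(g)$ the unique lowest-index term; the spaces $S_{\ge a}$ are indeed ideals, since the multiplication formula supports $b(i)b(j)$ on indices in $\{\max(i,j),\dots,i+j\}$.

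The problem is that the proof stops exactly where the theorem begins. Everything above only shows that $\pro$ is the projection onto \emph{some} indecomposable summand; the assertion to be proved is that this summand is $\Yng$ with $\mu_2=\lambda_2-g$, and your final paragraph replaces a proof of this with two declarations of intent. The claim that ``$b(a)$ acts as a degree-$a$ operator with respect to this filtration'' is never defined, let alone established, and ``I expect to settle it either through the weight-space description or by induction via the truncated idempotents'' names two candidate strategies without carrying out either. Nothing in your argument excludes the possibility that the bijection $g\mapsto\nu(g)$ is some other permutation of the admissible labels --- a priori the idempotent with leading term $b(g)$ could project onto the summand containing $S^{\nu}$ for a different admissible $\nu$. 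There is also a secondary imprecision in the intended final step: the Specht factors $S^{(r-a,a)}$ of $\perm$ are subquotients, not submodules, so ``each lying in exactly one indecomposable summand'' must be phrased via Specht filtration multiplicities of the Young modules before it can be used. Bridging the algebraic data (leading terms and the ideal filtration of $S_K(\lambda)$) with the module-theoretic data (which Specht factor sits in which summand) is precisely the content of the cited result in \cite{DEH}; without that bridge, what you have is a correct and useful reduction, not a proof.
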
 We also recall the following lemma from \cite{DEH}, which is used when finding a minimal set of generators of $\End(\Yng)$. \begin{theorem}[Orthogonality Lemma]\cite{DEH} Suppose that $\binom{(m+2g)_s}{g_s} = \binom{0}{0}$, then $e^2_{m,g}\cdot b(2^s)^2 = 0.$\end{theorem}
\section{The algebra $\End(\Yng)$}
In this section, we will see that the generators of the endomorphism algebras of $\End(\Yng)$ have a notion of \textit{size}. This will determine the elements of the algebra that are zero. Letting \textit{\underline{k}} $= \lbrace 1, 2, \ldots, k \rbrace$, we introduce the following definitions:
\begin{definition}\normalfont Let $A$ be a commutative algebra with fixed generators $$\lbrace x_1, \ldots, x_k\rbrace,$$ such that these generators have square zero. Let  $\emptyset \neq I \subset \textit{\underline{k}}$ and $x := \prod_{i \in I} x_i$, so that $x$ is a monomial in the generators. We also require that $x$ has no repeated factors. Define the function $\phi$ as follows: $$\phi(x) = \phi\left(\displaystyle\prod_{i \in I} x_i\right) := \displaystyle\sum_{i\in I} 2^{i}.$$ \end{definition}

\begin{definition}\label{order}\normalfont Let $A$ be a commutative algebra with fixed generators $$\lbrace x_1, \ldots, x_k\rbrace,$$ such that these generators have square zero. Let  $I$ and $J$ be non-empty subsets of \textit{\underline{k}}, and define $x := \prod_{i \in I} x_i$ and $y := \prod_{j \in J} x_j$, so that $x$ and $y$ are monomials in the generators. We again require that $x$ has no repeated factors and $y$ has no repeated factors. Define the ordering $\preceq$ on such elements $x$ and $y$ as follows: $$x \preceq y \mbox{ if and only if } \phi(x) \le \phi(y),$$ with $\phi$ as in Definition 3.1. One can see that for a fixed $k$, this is a total order, and we define $\vert x \vert$ as the position of $x$ in the ascending chain in this total order. We give an example of this below:\end{definition} 

\begin{example}\normalfont Let $A$ be a commutative algebra with fixed generators $$\lbrace x_1, x_2, x_3 \rbrace,$$ such that these generators have square zero. Then the distinct products in these three generators with no repeated factors are given by the set $$\lbrace x_1,x_2,x_3,x_1x_2,x_1x_3,x_2x_3,x_1x_2x_3\rbrace.$$ From Definition 3.1, we obtain $$\phi (x_1) = 2, \phi(x_2) = 4, \phi(x_3) = 8,$$ $$\phi(x_1x_2) = 6, \phi(x_1x_3) = 10, \phi(x_2x_3) = 12 \text{, and } \phi(x_1x_2x_3) = 14.$$ Then from Definition \ref{order}, we have: $$x_1 \preceq x_2 \preceq x_1x_2 \preceq x_3 \preceq x_1x_3 \preceq x_2x_3 \preceq x_1x_2x_3,$$ and so for example we write $\vert x_2x_3 \vert = 6.$
\end{example}

Let $\lambda = (\lambda_1, \lambda_2)$ and $\mu = (\mu_1,\mu_2)$ be partitions of $r$, and $K$ be a field of characteristic 2. We have that the algebra $\End(\Yng)$ is generated by the non-zero elements of the set $$\lbrace \pro b(2^k) : 1 \le 2^k \le \lambda_2 \rbrace,$$ where $\pro$ is the idempotent of $S_K(\lambda)$ such that $\pro\perm = \Yng$. To find a minimal set of generators, we prove the following lemma:
\begin{lemma}[Involvement Lemma]
If $b(2^i)$ is involved in $\pro$, then either $\pro b(2^i) = \pro$ or $\pro b(2^i) = 0$. 
\end{lemma}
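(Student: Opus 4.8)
The plan is to use the commutativity of $S_K(\lambda)$ to isolate the bit-$i$ factor of $\pro$ and reduce everything to a single relation. Since $b(2^i)$ is involved in $\pro$ we have $(m+2g)_i = 1$, so the factor of $\pro$ at position $i$ is $b(2^i)$ if $i \in J_{m,g}$ (that is, $g_i = 1$) and $1 - b(2^i)$ if $i \in I_{m,g}$ (that is, $g_i = 0$). Using commutativity, write $\pro = f_i\, h$, where $f_i$ is this factor and $h$ is the product of all the remaining factors. I claim the whole statement follows from the relation
$$\bigl(b(2^i)^2 - b(2^i)\bigr)\, h = 0. \qquad (\dagger)$$
Indeed, granting $(\dagger)$: if $i \in J_{m,g}$ then $\pro\, b(2^i) = b(2^i)^2 h = b(2^i) h + \bigl(b(2^i)^2 - b(2^i)\bigr) h = \pro$, while if $i \in I_{m,g}$ then $\pro\, b(2^i) = \bigl(b(2^i) - b(2^i)^2\bigr) h = 0$. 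In particular $(\dagger)$ forces $\pro\, b(2^i)$ to be an idempotent of the algebra $\pro S_K(\lambda)$, which — as $\pro$ is a primitive idempotent of the commutative algebra $S_K(\lambda)$ — is local, so that its only idempotents are $0$ and $\pro$; this is the conceptual reason behind the dichotomy.

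To prove $(\dagger)$ I would first evaluate $b(2^i)^2$ from the multiplication formula. By the mod-$2$ form of Lucas' theorem the coefficients $\binom{2^i + l}{2^i}$ and $\binom{2^i + l}{l}$ are both $1$ for $0 \le l < 2^i$, and because $(m + 2^{i+1})_t = m_t$ for all $t \le i$, only the low binary digits of $m$ survive; combining this with the factorisation $b(2^i + l) = b(2^i)\, b(l)$ coming from Lemma~\ref{lem :pcont} gives $b(2^i)^2 = b(2^i)\, c_i$, where
$$c_i = \sum_{\substack{1 \le n \le 2^i \\ n \preceq m}} b(2^i - n)$$
and $n \preceq m$ means that $n$ is a binary submask of $m$. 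Thus $(\dagger)$ is equivalent to $b(2^i)\,(c_i - 1)\, h = 0$, that is, to the assertion that $c_i$ acts as the identity after multiplication by $b(2^i)\, h$.

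The crux, and the step I expect to be the main obstacle, is precisely this last identity $b(2^i)\, c_i\, h = b(2^i)\, h$. Rewriting $b(2^i)\, b(2^i - n) = b(2^{i+1} - n)$ by Lemma~\ref{lem :pcont}, it amounts to showing that the submask sum $\sum_n b(2^{i+1} - n)\, h$ collapses onto the single term $b(2^i)\, h$; the leading submask $n = 2^i$ contributes $b(0) = 1$ exactly when $m_i = 1$, and all the lower terms must be accounted for. This is where the hypothesis $(m+2g)_i = 1$ and the defining submask inequalities $g_u \le (m+2g)_u$ — equivalently $B(m,g) \not\equiv_2 0$ — become indispensable: expanding against the orthogonal idempotents $e_{m,g'}$ and matching binary digits against the pattern recorded by $I_{m,g}$ and $J_{m,g}$ identifies which terms survive. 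The real difficulty is the binary bookkeeping — tracking the carries produced in forming $m + 2g$ and reconciling them with the submask constraints so that the unwanted terms cancel in characteristic $2$ and only $b(2^i)\, h$ remains.
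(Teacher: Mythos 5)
Your reduction is sound as far as it goes: granting $(\dagger)$, the two cases do follow exactly as you say, and your Lucas-theorem computation $b(2^i)^2 = b(2^i)\,c_i$, with $c_i$ the sum of $b(2^i-n)$ over binary submasks $n$ of $m$ with $1 \le n \le 2^i$, is correct --- it is essentially \cite[Lemma 4.2]{DEH} re-derived. Note, moreover, that $(\dagger)$ is not just sufficient but \emph{equivalent} to the lemma: since $\pro$ is a nonzero idempotent, in the $J$-case $\pro b(2^i)=0$ would force $\pro = \pro^2 = (\pro b(2^i))h = 0$, and in the $I$-case $\pro b(2^i)=\pro$ would force $\pro^2 = \pro(1+b(2^i))h = 0$; so the dichotomy you call the ``conceptual reason'' is automatic once the lemma holds. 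The problem is that this means your proposal has, in substance, only restated the lemma and computed $b(2^i)^2$: the identity $b(2^i)\,c_i\,h = b(2^i)\,h$, which you yourself flag as ``the crux'' and ``the main obstacle,'' is never proven. A description of the bookkeeping one expects to need (carries in $m+2g$, expansion against the other idempotents) is not an argument, and nothing in the proposal closes this gap.

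The reason the gap looks intractable from where you stand is that $(\dagger)$ is phrased against the partial product $h$, whereas every available tool is a statement about the full idempotent $\pro$. The paper closes the gap by induction on $i$ and by never splitting $\pro$ apart: substituting the recursion $b(2^i)^2 = b(2^i)\bigl[m_i + \sum_{k<i} b(2^k)^2\bigr]$ from \cite[Lemma 4.2]{DEH} (the same information as your $c_i$) and multiplying through by $\pro$, every lower-order term takes the form $\pro\,b(2^k)^2$ with $k<i$. If $b(2^k)$ is involved in $\pro$, the inductive hypothesis gives $\pro\,b(2^k) \in \lbrace 0, \pro \rbrace$, hence $\pro\,b(2^k)^2 \in \lbrace 0, \pro\rbrace$; if it is not involved, the Orthogonality Lemma \cite[2.4]{DEH} gives $\pro\,b(2^k)^2 = 0$. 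In characteristic $2$ the whole bracket therefore acts on $\pro$ as $0$ or $1$, and the lemma follows with no analysis of carries or submasks at all (the base case $i=0$ is the one-line computation $b(1)^2 = m_0 b(1)$). That inductive step --- trading the digit-by-digit evaluation of $c_i h$ for the pair ``inductive hypothesis on involved indices, Orthogonality Lemma on the rest'' --- is exactly the idea your proposal is missing, and without it the proposal is a setup, not a proof.
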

\begin{proof}
We prove this for all $b(2^i)$ by induction on $i$. 

Assume first that $i=0,$ and let $m=[m_0,\ldots,m_s]$ be the binary expansion of $m$. Suppose that $b(2^0) = b(1)$ is involved in $\pro$, then $(m+2g)_0 = 1$. In binary for all $g \ge 0$, we have $(2g)_0 = 0$ and so $m_0 = 1$. By \cite[Example 4.1]{DEH}, we have $b(1)^2 = m_0b(1) = b(1)$. We distinguish two cases: \begin{itemize}
\item If the factor corresponding to $u = 0$ in the definition of $\pro$ is $b(1)$, then for $\pro = x\cdot b(1)$: $$\begin{array}{l l}\pro\cdot b(1) &= x \cdot b(1)^2\\ &= x \cdot b(1)\\ &= \pro. \end{array}$$
\item If the factor corresponding to $u = 0$ in the definition of $\pro$ is $1-b(1)$, then for $\pro = x\cdot (1-b(1))$: $$\begin{array}{l l} \pro\cdot b(1) &= x \cdot (1-b(1)) \cdot b(1)\\ &= x\cdot (b(1)-b(1)) = 0.\end{array}$$
\end{itemize}Therefore the result holds for $i=0$. 

Now let $i > 0,$ and let the result hold for all $b(2^k)$, such that $k<i$. Consider $\pro\cdot b(2^i)$, where $b(2^i)$ is involved in $\pro$. By construction of $\pro$, either $b(2^i)$ is a factor of $\pro$ or $1-b(2^i) \equiv_2 1+b(2^i)$ is a factor of $\pro$. Therefore if, for some $x \in S_K(\lambda)$, we have: $$\pro = x \cdot b(2^i) \mbox{ or } \pro = x \cdot (1+b(2^i)) ,$$ then $$\pro b(2^i) = x \cdot b(2^i)^2 \mbox{ or } x \cdot (b(2^i)+b(2^i)^2),$$respectively. By \cite[Lemma 4.2]{DEH}, for the $v$ such that $0 \le v \le i$ and $v$ is maximal with respect to $m_{v-1} = 0$ in the binary expansion of $m$, one of the following holds:
\begin{itemize}
\item The factor corresponding to $u=i$ in the definition of $\pro$ is $b(2^i)$: As $$b(2^i)^2 = b(2^i)[m_i\cdot 1 + \displaystyle\sum_{k=v-1}^{i-1}b(2^k)^2],$$ we have: $$\begin{array}{l l}\pro b(2^i) &= x \cdot b(2^i)^2\\ &= x[b(2^i)[m_i\cdot 1 + \sum_{k=v-1}^{i-1} b(2^k)^2]]\\ &= \pro [m_i\cdot 1 + \sum_{k=v-1}^{i-1} b(2^k)^2]. \end{array}$$
\item  The factor corresponding to $u=i$ in the definition of $\pro$ is $1+b(2^i)$: As $$b(2^i)+b(2^i)^2 = b(2^i)[1+m_i\cdot 1 + \displaystyle\sum_{k=v-1}^{i-1}b(2^k)^2],$$ we have: $$\begin{array}{l l}\pro b(2^i) &= x \cdot (b(2^i)+b(2^i)^2)\\ &= x[b(2^i)[1+m_i\cdot 1 + \sum_{k=v-1}^{i-1} b(2^k)^2]]\\ &= \pro [1+m_i\cdot 1 + \sum_{k=v-1}^{i-1} b(2^k)^2].\end{array}$$
\end{itemize} Note that if $\pro b(2^i) =\pro$, then $\pro b(2^i)^2=\pro$, and if $\pro b(2^i) = 0$, then $\pro b(2^i)^2=0$. 

By the induction hypothesis, for $v-1 \le k \le i-1$ such that $b(2^k)$ is involved in $\pro$, either $\pro b(2^k)^2 = \pro$ or $\pro b(2^k)^2 = 0$. For $b(2^k)$ not involved in $\pro$, the factor $\binom{(m+2g)_k}{g_k}$ of the binary expansion of $\binom{m+2g}{g}$ satisfies the condition of the Orthogonality Lemma. Therefore in all cases: $$\pro b(2^i) = \pro \cdot (\text{sum of 1's and 0's}).$$ This is equal to $\pro$ or 0 as we are in a field of characteristic 2, and so the result holds by induction.
\end{proof} It follows from the Involvement Lemma that the generators of the algebra $\End(\Yng)$ are the $\pro b(2^i)$ such that $b(2^i)$ is not involved in $\pro$. These are precisely the non-zero elements of the set \begin{equation}\label{eq: gens}T:= \lbrace\pro b(2^s) : (m+2g)_s = 0 \rbrace.\end{equation} As $\binom{m+2g}{g}$ is non-zero, the generators of the algebra $\End(\Yng)$ satisfy the conditions of the Orthogonality Lemma. It follows that the generators of $\End(\Yng)$ all have square zero, and so for $k$ equal to the cardinality of the set $T$, the algebra $\End(\Yng)$ is isomorphic to a quotient of \begin{equation}\label{eq: orth} K[x_1,\ldots,x_k]/(x_i^2: i = 1, \ldots, k). \end{equation}
\begin{definition}\normalfont Let $i = [i_0, i_1, \ldots, i_s]$. We call the set $$S_i:=\{ u: i_u \neq 0\}$$ the support of the basis element $b(i)$ of $S_K(\lambda)$.
\end{definition} We note that defining the support of $b(i)$ in this way gives a bijection between the canonical basis elements of $S_K(\lambda)$ and the subsets of $\{1, 2, \ldots, \lambda_2\}$, via the map $b(i) \mapsto S_i$. 

We now fix $e_{m,g}$.  Consider $\pro$ as a linear combination of the elements in the canonical basis of $S_K(\lambda)$. By the construction of $\pro$, a basis element $b(i)$ occurs in this linear combination only if it is a product of elements $b(2^u)$ such that $(m+2g)_u=1$, {\it i.e.} if $u$ is an element of $S_i$, then $(m+2g)_u=1$. Therefore the support of $b(i)$ is contained in $I_{m,g}\cup J_{m,g}$. 

\begin{theorem}[Basis Theorem]\label{basis} Let $K$ be a field of characteristic 2. Suppose that $r$ is a postive integer, and let $\mu = (\mu_1, \mu_2)$ be a partition of $r$. The algebra $\End(\Yng)$ has basis given by the non-zero elements in the set
$$S := \{ e_{m,g}b(j):  \mbox{the support of $b(j)$ is disjoint from $I_{m,g}\cup J_{m,g}$}\},$$
where $m$ and $g$ are such that $\pro \perm = \Yng$.
\end{theorem}
\begin{proof}
We first show that the set $S$ spans the algebra $\End(\Yng)$. Suppose that $j$ is contained in $I_{m,g} \cup J_{m,g}$. Then by the Involvement Lemma, we have that either 
\begin{center} $\pro b(2^j)= \pro$ or $\pro b(2^j) = 0$. \end{center} As $b(i)b(0) = b(i)$ and hence $\pro = \pro b(0)$, in both of the above cases $\pro b(2^j)$ can be expressed as a linear combination of the elements in the set $S$.

Consider a non-zero element $e_{m,g}b(i)$ of $\End(\Yng)$. If we write \\$\pro b(i)$ as a linear combination of the elements in the canonical basis of $S_K(\lambda)$, then all $b(l)$ that occur in this linear combination can be written as:
\begin{equation}\label{eq: decomp} b(l) = b(k)b(j).\end{equation} Using the binary expansion of $l$, we construct this factorisation such that the basis element $b(k)$ has support contained in $I_{m,g}\cup J_{m,g}$ and disjoint from the support of $b(j)$. By construction, the support of $b(l)$ is then the disjoint union of the support of $b(k)$ and the support of $b(j)$. For the $b(l) = b(k)b(j)$ in the linear combination of $\pro b(i)$, as $\pro$ is an idempotent element of $S_K(\lambda)$, we have that $\pro b(i)$ equals the sum of the elements $$\pro b(k)b(j).$$ Repeatedly using the Involvement Lemma and that $\pro$ is an idempotent, we have $\pro b(k) = \pro$ or $\pro b(k) = 0$. Therefore $\pro b(i)$ can be written as a linear combination of the set $S$, and so the elements of $S$ span the algebra $\End(\Yng)$. 

It therefore remains to show that the elements of the set $S$ are linearly independent. From the decomposition of the basis elements described in (\ref{eq: decomp}), it follows that if $i \neq j$ and both $\pro b(i)$ and $\pro b(j)$ belong to the set $S$, then the linear combinations of these two elements in terms of the canonical basis of $S_K(\lambda)$ have no basis elements in common. Hence $S$ is linearly independent.
\end{proof} We now prove the following result:
\begin{theorem}\label{dim thm}
Let $K$ be a field of characteristic 2. Suppose that $r$ is a positive integer and $A= \End(\Yng)$, where $\mu = (\mu_1,\mu_2)$ is a partition of $r$. If $A$ has dimension $n$ and $k$ is the unique non-negative integer such that $2^{k-1} < n \le 2^k$, then $A$ is isomorphic as a $K$-algebra to $$B:=K[x_1,\ldots,x_k]/(\lbrace x_i^2: i = 1,\ldots,k\rbrace \cup R),$$ where $$R:= \lbrace x=x_{r_1}x_{r_2}\ldots x_{r_l}x_k : \mbox{$r_i\neq r_j$ and }\vert x \vert \ge n\rbrace$$ and $\vert x \vert$ is as in Definition \ref{order}.
\end{theorem}
\begin{proof}
Using the discussion on the algebra $A$ leading to (\ref{eq: orth}), the dimension $n$ of $A$ satisfies $n \le 2^k$, where $k$ is the size of the set $T$ in (\ref{eq: gens}). We label the elements in $T$ as: $$\pro b(2^{a_i}), \ a_1< a_2 < \ldots < a_k.$$

Consider an element $\pro b(i) \neq 0$, where $b(i)$ has $b(2^{a_k})$ as a factor.  Such an element is non-zero if and only if the following two conditions hold: 
\begin{enumerate}[(i)]
\item The support of $b(i)$ is disjoint from $I_{m,g} \cup J_{m,g}$.
\item The term $b(l)$ occurring in the expansion of $\pro b(i)$ with smallest $l$ satisfies $l \le \lambda_2$. 
\end{enumerate} By (\ref{eq: gens}), each generator of $\End(\Yng)$ has disjoint support from $I_{m,g} \cup J_{m,g}$. Therefore a product of these generators with no repeated factors, $\pro b(j)$, also does. If $j < i$, then the smallest $b(l)$ occurring in the expansion of $\pro b(j)$ also satisfies $l \le \lambda_2$. Therefore $\pro b(j)$ is non-zero.

As $\pro b(2^{a_k})$ is non-zero, using the labelling $b(2^{a_i}) \mapsto x_i$, the algebra $A$ has a proper subalgebra isomorphic to $$K\left[x_1, \ldots , x_{k-1}\right]/(x^2_1,\ldots , x^2_{k-1}),$$ of dimension $2^{k-1}$. This subalgebra does not contain $x_k$, and so we must have that $2^{k-1} < n$.

The squares of the generators of $A$ are zero, and the multiplication of the generators of $A$ is commutative. We therefore have a well-defined algebra map: $$\theta: K[x_1, x_2, \ldots, x_k]/(x_i^2) \longrightarrow A,$$ where $\theta(x_i) := e_{m,g}b(2^{a_i})$, and this is surjective.

If $e_{m,g}b(i)$ is in the basis $S$ from Theorem \ref{basis}, then $b(i)$ is uniquely a product of distinct $b(2^{a_j})$. We therefore obtain a natural linear ordering on the basis elements of $A$, using the natural order on the integers that are sums of distinct $2^{a_j}$. We note that the map $\theta$ preserves the linear order on the generators of $K\left[x_1,\ldots,x_k\right]$, as defined in Definition \ref{order}. By counting dimensions and using the basis theorem, it follows that the kernel of $\theta$ is the ideal: $$(\lbrace x_i^2: i = 1,\ldots,k\rbrace \cup R),$$ for $$R:= \lbrace x=x_{r_1}x_{r_2}\ldots x_{r_l}x_k : \mbox{$r_i\neq r_j$ and }\vert x \vert \ge n\rbrace,$$ where $\vert x \vert$ is as in Definition \ref{order}, and so the result follows. 
\end{proof} As an immediate consequence of this theorem, we see that for partitions $\gamma = (\gamma_1,\gamma_2)$ and $\mu = (\mu_1,\mu_2)$, and $K$ a field of characteristic 2, the algebras $\End(Y^\gamma)$ and $\End(\Yng)$ are isomorphic if and only if they have the same dimension.
\begin{example}\label{indecomp}
\normalfont Let $\lambda = (n -1, n -1)$. Let $\mu = (\mu_1,\mu_2)$ be a partition of $2(n - 1)$. Writing $g=\lambda_2-\mu_2$, we obtain:
$$\binom{m+2g}{g} = \binom{2g}{g},$$
as $m= \lambda_1-\lambda_2 = 0$. This binomial coefficient is non-zero modulo $2$ if and only if $g=0$. Therefore the identity element of $S_K(\lambda)$ is a primitive idempotent in $S_K(\lambda)$ and $M^{(n-1,n-1)} \cong Y^{(n-1,n-1)}$ by the definition of the Young modules. Therefore by Definition $\ref{can basis}$, the algebra $\End(Y^{(n-1,n-1)})$ has dimension $n$, with basis given by $$\lbrace 1, b(1), \ldots, b(n-1)\rbrace.$$ By (\ref{eq: gens}), the generators of $\End(Y^{(n-1,n-1)})$ are $$\lbrace b(2^i) : 1 \le 2^i \le n-1 \rbrace.$$ Labelling these generators as \begin{equation}\label{eq: labelling}b(2^i) \mapsto x_{i+1} \text{ for } i=0,\ldots,k-1,\end{equation} we recall from the proof of Theorem \ref{dim thm} that $\End(Y^{(n-1,n-1)})$ contains a proper subalgebra isomorphic to $$K[x_1,\ldots,x_{k-1}]/( x_i^2 : i = 1, \ldots, k-1 ),$$ of dimension $2^{k-1}$. As $b(2^{k-1})$ is also a basis vector of $\End(Y^{(n-1,n-1)})$, there are only $n-(2^{k-1}+1)$ other possible basis vectors for the endomorphism algebra of $Y^{(n-1,n-1)}$. Using the labelling defined in (\ref{eq: labelling}), the remaining $n-(2^{k-1}+1)$ basis vectors will be products of generators with $x_k$ as a factor. By Definition \ref{can basis}, an element $b(i)$ is zero if and only if $i > n-1$. Therefore the images of the monomials that are zero in $\End(\Yng)$, with $\pro b(2^{a_k})$ as a factor, are given by the set:
$$R:=\lbrace x_{r_1}x_{r_2}\ldots x_{r_l}x_k : \mbox{$r_i\neq r_j$ and }\sum_{i=1}^l 2^{r_i - 1} + 2^{k-1} > n-1 \rbrace, $$ and so $\End(Y^{(n-1,n-1)})$ is isomorphic to the algebra:
$$K[x_1,\ldots,x_k]/(\lbrace x_i^2: i =1, \ldots, k \rbrace \cup  R ).$$
\end{example}
\begin{corollary}[Dimension Theorem] Let $\mu = (\mu_1,\mu_2)$ be a partition of $r$. If the algebra $A = \End(\Yng)$ has dimension $n$, then $A$ is isomorphic to the algebra
$$K[x_1,\ldots,x_k]/(\lbrace x_i^2: i = 1, \ldots, k \rbrace \cup  R),$$ where $R$ is such that $$R = \lbrace x_{r_1}x_{r_2}\ldots x_{r_l}x_k : \mbox{$r_i\neq r_j$ and } \sum_{i=1}^l 2^{r_i - 1} + 2^{k-1} > n-1 \rbrace.$$ 
\end{corollary}
\begin{proof}
This follows from Theorem \ref{dim thm} and Example \ref{indecomp}.
\end{proof}
\begin{center}
{\bf Acknowledgements}
\end{center}
The work in this paper was completed while the author was completing his MSc. thesis, under the supervision of Dr. Karin Erdmann. The author most gratefully acknowledges her support. The final publication is available at Springer via http://dx.doi.org/10.1007/s00013-015-0854-2.
\bibliographystyle{unsrt}

\end{document}